\newtheorem{theorem}{Theorem} 
\newtheorem{lemma}[theorem]{Lemma}
\newtheorem{corollary}[theorem]{Corollary}
\theoremstyle{definition}
\theoremstyle{theorem}
\newtheorem{remark}[theorem]{Remark}
\def\ba{\begin{array}}
\def\ea{\end{array}}
\def\bea{\begin{eqnarray} \label}
\def\eea{\end{eqnarray}}
\def\be{\begin{equation} \label}
\def\ee{\end{equation}}
\def\bit{\begin{itemize}}
\def\eit{\end{itemize}}
\def\ben{\begin{enumerate}}
\def\een{\end{enumerate}}
\def\lan{\langle}
\def\ran{\rangle}
\def\BB{\mathbb{B}}
\def\GG{\mathbb{G}}
\def\NN{\mathbb{N}}
\def\RR{\mathbb{R}}
\def\RRd{\mathbb{R}^{d}}
\def\SS{\mathbb{S}}
\def\SSd{\mathbb{S}^{d-1}}
\def\g{\gamma}
\def\k{\kappa}
\def\l{\lambda}
\def\D{\Delta}
\def\Sig{\Sigma}
\def\O{\Omega}
\def\bE{\mathbf{E}}
\def\bP{\mathbf{P}}
\def\bV{\mathbf{Var}}
\def\cA{\mathcal{A}}
\def\cF{\mathcal{F}}
\def\dint{\textup{d}}
\def\vol{\textup{vol}}
\begin{document}

\title{\bfseries Intrinsic volumes and Gaussian polytopes: \\ the missing piece of the jigsaw}

\author{Christoph Th\"ale\footnotemark[1]\; and Imre B\'ar\'any\footnotemark[2]\;\footnotemark[3]}

\date{} \renewcommand{\thefootnote}{\fnsymbol{footnote}}

\footnotetext[1]{Ruhr University Bochum, Faculty of Mathematics, NA 3/68, D-44780 Bochum, Germany. E-mail: christoph.thaele@rub.de}

\footnotetext[2]{R\'enyi Institute of Mathematics, Hungarian Academy of Sciences, H-1364 Budapest, Hungary. E-mail: barany@renyi.hu}

\footnotetext[2]{University College London, Department of Mathematics, London WC1E 6BT, England.}

\maketitle

\begin{abstract}
The intrinsic volumes of Gaussian polytopes are considered. A lower variance bound for these quantities is proved, showing that, under suitable normalization, the variances converge to strictly positive limits. The implications of this missing piece of the jigsaw in the theory of Gaussian polytopes are discussed.
\bigskip
\\
{\bf Keywords}. {Gaussian polytopes, random polytopes, stochastic geometry, variance lower bound}\\
{\bf MSC}. Primary 60D05; Secondary 52A22.
\end{abstract}

\section{Introduction and results}

Fix a space dimension $d\in\NN$ and denote by $\g_d$ the standard Gaussian measure on $\RR^d$ with density $\varphi_d$ equal to
\begin{equation}\label{eq:DensityNormal}
\varphi_d(x) := (2\pi)^{-{d\over 2}}\,\exp\bigg(-{\|x\|^2\over 2}\bigg)\,,\qquad x\in\RRd\,.
\end{equation}
Given $n\geq d+1$ let $X_1,\ldots,X_n$ be independent random points that are distributed on $\RRd$ according to the probability measure $\g_d$. The random convex hull
$$
K_n:=[X_1,\ldots,X_n]
$$
of these points is a \textit{Gaussian polytope}. These random polytopes are central objects considered in stochastic geometry and are also of importance in convex geometric analysis or coding theory. For example, Gluskin \cite{Gluskin} has used Gaussian polytopes in his analysis of the diameter of the Minkowski compactum and Gaussian polytopes also arise as lower-dimensional shadows of randomly rotated high-dimensional regular simplices as shown by Baryshnikov and Vitale \cite{BaryshnikovVitale}. We refer to the survey article of Reitzner \cite{ReitznerSurvey} for further background information and references.

\medskip

We denote for $\ell\in\{0,\ldots,d\}$ by $V_\ell(K_n)$ the $\ell$th intrinsic volume of $K_n$, that is, 
$$
V_\ell(K_n) = {d\choose \ell}{\k_d\over\k_\ell\k_{d-\ell}}\int_{\GG(d,\ell)}\vol_\ell(K_n|L)\,\nu_\ell(\dint L)\,.
$$
Here, $\GG(d,\ell)$ is the Grassmannian of $\ell$-dimensional linear subspaces of $\RRd$ supplied with the unique Haar probability measure $\nu_\ell$ and $\vol_\ell(K_n|L)$ stands for the $\ell$-dimensional Lebesgue measure of the orthogonal projection $K_n|L$ of $K_n$ onto $L$ measured within the subspace $L$. Moreover, for $j\in\NN$, $\k_j:=\pi^{j/2}\,\Gamma(1+{j\over 2})^{-1}$ denotes the volume of the $j$-dimensional unit ball. The intrinsic volumes are of outstanding importance in convex geometry, since according to a classical theorem of Hadwiger they form a basis of the vector space of all continuous and rigid-motion invariant real-valued valuations on convex sets, cf.\ \cite{SchneiderBook}. For example, $V_d(K_n)=\vol_d(K_n)$ is the volume, $2V_{d-1}(K_n)$ coincides with the surface area and ${2\k_{d-1}\over d\k_d}V_1(K_n)$ corresponds to the mean width of $K_n$.

\medskip

It is well known from the work of Affentranger \cite{Affentranger} that the expectation $\bE[V_\ell(K_n)]$ of $V_\ell(K_n)$ satisfies
$$
\lim_{n\to \infty} (\log n)^{-{\ell \over 2}}\,\bE[V_\ell(K_n)] = {d\choose \ell}{\k_d\over\k_{d-\ell}}\,.
$$
More recently, the asymptotic behaviour of the variance $\bV[V_\ell(K_n)]$ of $V_\ell(K_n)$ has moved into the focus of attention. Using the classical Efron-Stein jackknife inequality Hug and Reitzner \cite{HugReitzner} have obtained a first upper bound of the form $\bV[V_\ell(K_n)] \leq c_d(\log n)^{{\ell-3\over 2}}$ with a constant $c_d\in(0,\infty)$ only depending on the space dimension $d$ (but not on $\ell$). In a remarkable paper of Calka and Yukich \cite{CalkaYukich} the precise variance asymptotic was derived, showing thereby that the upper bound from \cite{HugReitzner} does not have the right order of magnitude. In fact, \cite[Theorem 1.5]{CalkaYukich} says that
\begin{equation}\label{eq:VarAsymptotic}
\lim_{n\to \infty} (\log n)^{{d+3\over2}-\ell}\,\bV[V_\ell(K_n)] = c_{d,\ell}\,,
\end{equation}
with constants $c_{d,\ell}\in[0,\infty)$ only depending on $d$ and on $\ell$. However, using their methods the authors of \cite{CalkaYukich} were not able to exclude the possibility that $c_{d,\ell}=0$. The aim of the present paper is to fill this gap and to show that, in fact, $c_{d,\ell}>0$. This answers a question raised at several places in the literature, see \cite[Section 14]{BaranyVu}, the comment after \cite[Theorem 1.5]{CalkaYukich} or \cite[Remark 3.6]{GroteThaele}. Our result reads as follows:

\begin{theorem}\label{thm:Main}
Let $\ell\in\{1,\ldots,d\}$ and let $K_n$ be a Gaussian polytope. Then there exists a constant $v_{d,\ell}\in(0,\infty)$ only depending on $d$ and on $\ell$ such that
$$
\bV[V_\ell(K_n)] \geq v_{d,\ell}\,(\log n)^{-{d+3\over 2}+\ell}\,,
$$
whenever $n$ is sufficiently large.
\end{theorem}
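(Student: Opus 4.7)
The plan is to establish the variance lower bound by a cell-decomposition argument in the spirit of the economic cap covering technique from the theory of uniform random polytopes, adapted to the Gaussian setting via the scaling and score-stabilization framework of \cite{CalkaYukich}. The target order $(\log n)^{\ell-(d+3)/2}$ should be realised as the product of a polylogarithmic number of asymptotically independent contributions --- one per ``cell'' covering the typical boundary shell of $K_n$ --- each of appropriate polylogarithmic variance.

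Concretely, I would proceed in four steps. First, I would invoke the score representation
$$V_\ell(K_n) = \sum_{i=1}^n \xi_\ell(X_i,\{X_1,\ldots,X_n\})$$
from \cite{CalkaYukich}, in which $\xi_\ell$ enjoys exponential stabilization on the local scale $1/\sqrt{\log n}$ near the sphere of radius $\sqrt{2\log n}$. Second, I would partition the typical boundary shell into $M=M(n)$ disjoint cells $B_1,\ldots,B_M$ of side length comparable to the stabilization scale, with $M$ growing polylogarithmically at the rate dictated by the target variance order. Third, for each cell $B_i$, I would condition on the sample configuration outside a small enlargement of $B_i$ and construct two disjoint events regarding the configuration inside $B_i$ --- each of conditional probability bounded away from $0$ and $1$ given the outside data --- on which $V_\ell(K_n)$ takes values differing by an amount $\Delta_i$ of the correct polylogarithmic order, thereby yielding a conditional variance of at least $c\,\Delta_i^2$. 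Fourth, I would aggregate these per-cell conditional variances via a telescoping martingale decomposition of $V_\ell(K_n)-\bE[V_\ell(K_n)]$ along a sequential conditioning on $B_1,\ldots,B_M$, using the exponential stabilization of the scores to ensure that cells separated beyond the stabilization radius contribute asymptotically uncorrelated fluctuations, so that the total variance is at least a constant fraction of $\sum_i \Delta_i^2$.

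The principal technical obstacle lies in the third step. The intrinsic volume $V_\ell$ is not additive over disjoint spatial regions --- the orthogonal projections $K_n|L$ entangle contributions from all of $K_n$ --- so the local perturbation inside $B_i$ must be shown to produce a genuine, non-cancelling change in $V_\ell(K_n)$ rather than merely in the single score $\xi_\ell(X_i,\cdot)$. The score representation localizes the task to the sum of scores affected by the perturbation, but one still has to rule out systematic cancellation across subspaces $L\in\GG(d,\ell)$, which requires a careful geometric analysis together with uniform (in $n$ and in the cell) versions of the stabilization estimates from \cite{CalkaYukich}. A subsidiary difficulty is ensuring that the error in the approximate independence across distant cells is negligible compared with the positive per-cell variance aggregated over all $M$ cells.
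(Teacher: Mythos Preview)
Your proposal is an outline rather than a proof: you correctly identify the architecture (many local perturbations summed) and the principal difficulty (non-additivity of $V_\ell$), but you leave the latter explicitly unresolved. Without a concrete mechanism that forces the projected-volume change not to cancel across $L\in\GG(d,\ell)$, step three is a gap, and steps one and four then have nothing to aggregate.

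The paper closes exactly this gap, and does so with considerably less machinery than you propose. Two differences are worth highlighting. First, the paper does not use the score representation, exponential stabilization, or a martingale decomposition at all. Instead it imports the geometric construction of \cite{BaranyVu}: $m\approx(\log n)^{(d-1)/2}$ well-separated sites $y_i$ on the sphere of radius $r\approx\sqrt{2\log n}$, at each of which one builds a thin simplex $\Delta_i$ and tiny corner simplices $\Delta_i^0,\ldots,\Delta_i^d$. The event $A_i$ (one sample point in each $\Delta_i^j$ and none in certain half-spaces) has probability bounded below uniformly, and crucially the cone lemma (Lemma~\ref{lem:ConesCi}) guarantees that on $A_i$ the translated cone $z_i^0+C_i$ contains every other $\Delta_k$. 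This yields \emph{exact} conditional independence of the local contributions given the $\sigma$-field $\cF$ generated by all points outside the active $\Delta_i^0$'s; one then simply uses $\bV[V_\ell(K_n)]\ge \bE\bV[V_\ell(K_n)\mid\cF]=\bE\sum_i {\bf 1}_{A_i}\bV_i[\widetilde V_\ell(Z_i;F_i)]$. No approximate-independence error term arises, so your ``subsidiary difficulty'' disappears entirely.

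Second, the non-cancellation obstacle you flag is handled by a direct Grassmannian estimate (Lemma~\ref{lem:Angle}): for $z\in\SSd$ one has $\nu_\ell(\{L:\sphericalangle(z,L)\le a\})\gg a^{d-\ell}$. The local perturbation moves the apex point $Z_i\in\Delta_i^0$ by order $r^{-1}$ along the axis direction. For those $L$ making angle at most order $r^{-1}$ with that axis, one can exhibit a region $G_i\subset[Z_i^1,F_i]\setminus[Z_i^2,F_i]$ whose projection onto $L$ has $\ell$-volume $\gg r^{-1}$. Since the integrand $\vol_\ell([z,F_i]|L)$ is monotone in $z$ for these configurations, there is no cancellation: one obtains $\widetilde V_\ell(Z_i^1;F_i)-\widetilde V_\ell(Z_i^2;F_i)\gg r^{-1}\cdot r^{-(d-\ell)}=(\log n)^{-(d-\ell+1)/2}$, hence local variance $\gg(\log n)^{-(d-\ell+1)}$. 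Summing over $m$ sites gives the claimed $(\log n)^{\ell-(d+3)/2}$. This is the missing idea in your step three; once you have it, the stabilization and martingale apparatus you propose become unnecessary.
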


In particular, Theorem \ref{thm:Main} in conjunction with \eqref{eq:VarAsymptotic} shows that the limit
$$
\lim_{n\to\infty}(\log n)^{{d+3\over 2}-\ell}\,\bV[V_\ell(K_n)] = c_{d,\ell}
$$
exists and takes a strictly positive and finite value.

\begin{remark}\rm 
\begin{itemize}
\item[(i)] Let us first comment on the boundary case $\ell=0$ in Theorem \ref{thm:Main}. Since $V_0(K)={\bf 1}_{\{K\neq\emptyset\}}$ for any convex set $K\subset\RRd$, we have that $V_0(K_n)=1$ with probability one and hence $\bV[V_0(K_n)]=0$.
\item[(ii)]  Since $V_d(K_n)$ is the volume of the Gaussian polytope $K_n$, the case $\ell=d$ is already covered by Theorem 6.1 in \cite{BaranyVu}, which ensures that $v_{d,d}\in(0,\infty)$. Our proof comprises this situation as a special case.
\end{itemize}
\end{remark}

A random polytope model closely related to $K_n$ can be described as follows. For each $n\in\NN$ let $\eta_n$ be a Poisson point process on $\RRd$ with intensity measure $n\g_d$. The convex hull of the points of $\eta_n$ will be denoted by $\Pi_n$ and is called the \textit{Gaussian Poisson polytope}. Following the coupling construction in the proof of \cite[Lemma 7.1]{BaranyVu} one easily sees that expectation and variance asymptotic for $\Pi_n$ are literally the same as for $K_n$. Moreover, the strict positivity of the constants $v_{d,\ell}$ in Theorem \ref{thm:Main} implies that $(\log n)^{{d+3\over 2}-\ell}\bV[V_\ell(\Pi_n)]$ converges to a positive and finite limit. We summarize the missing piece in the proof of this result in the following corollary:

\begin{corollary}\label{cor:PoissonPolytopes}
Let $\ell\in\{1,\ldots,d\}$ and let $\Pi_n$ be the Gaussian Poisson polytope. Then there exists a constant $v_{d,\ell}\in(0,\infty)$ only depending on $d$ and on $\ell$ such that
$$
\bV[V_\ell(\Pi_n)] \geq v_{d,\ell}\,(\log n)^{\ell-{d+3\over 2}}\,,
$$
whenever $n$ is sufficiently large.
\end{corollary}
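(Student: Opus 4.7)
The plan is to combine the standard Poissonization coupling between $K_n$ and $\Pi_n$ with Theorem \ref{thm:Main} via the law of total variance. Let $N$ be Poisson distributed with parameter $n$ and, conditionally on $N=k$, let $X_1,\ldots,X_k$ be i.i.d.\ random points in $\RRd$ with distribution $\g_d$. Then $\{X_1,\ldots,X_N\}$ forms a Poisson point process on $\RRd$ with intensity measure $n\g_d$, so that $\Pi_n=[X_1,\ldots,X_N]=K_N$ in the notation of the excerpt, following the coupling in the proof of \cite[Lemma 7.1]{BaranyVu}; in particular $V_\ell(\Pi_n)=V_\ell(K_N)$ almost surely.

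Conditioning on $N$ and discarding the nonnegative $\bV[\bE[V_\ell(K_N)\mid N]]$ term in the law of total variance yields
$$
\bV[V_\ell(\Pi_n)] \;\geq\; \bE\!\left[\bV\!\left[V_\ell(K_N)\mid N\right]\right] \;=\; \sum_{k\geq d+1}\PP(N=k)\,\bV[V_\ell(K_k)].
$$
I then restrict the sum to the concentration window $k\in[n/2,2n]$, on which Poisson concentration (for instance via Chebyshev's inequality, since $\bV[N]=n$) ensures $\PP(N\in[n/2,2n])\to 1$ as $n\to\infty$. By Theorem \ref{thm:Main}, provided $n$ is large enough that the theorem applies uniformly for $k\in[n/2,2n]$, one has $\bV[V_\ell(K_k)]\geq v_{d,\ell}(\log k)^{\ell-{d+3\over 2}}$ for every such $k$. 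Since the exponent $\ell-{d+3\over 2}$ is a fixed constant and the logarithm varies by at most a multiplicative constant on the window $[n/2,2n]$, there exists $c>0$ with $(\log k)^{\ell-{d+3\over 2}}\geq c\,(\log n)^{\ell-{d+3\over 2}}$ uniformly on this window (regardless of the sign of the exponent). Substituting back yields $\bV[V_\ell(\Pi_n)]\geq v'_{d,\ell}(\log n)^{\ell-{d+3\over 2}}$ for an appropriate constant $v'_{d,\ell}>0$ and all sufficiently large $n$, which is the desired bound.

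I anticipate no serious obstacle: the argument reduces Corollary \ref{cor:PoissonPolytopes} to Theorem \ref{thm:Main} via Poissonization, and the only routine check is the slow variation of the logarithmic factor on the linear window around $n$, which is immediate. Note also that the boundary case $\ell=0$ is trivial since $V_0(\Pi_n)=\mathbf{1}_{\{\Pi_n\neq\emptyset\}}$, so the corollary is asserted, like the theorem, only for $\ell\in\{1,\ldots,d\}$.
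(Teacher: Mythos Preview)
Your argument is correct and follows essentially the same route the paper indicates: the paper does not give a separate proof of the corollary but merely refers to the coupling construction in \cite[Lemma 7.1]{BaranyVu} to transfer the lower variance bound from $K_n$ to $\Pi_n$, and your law-of-total-variance computation conditioned on $N$ is precisely an explicit version of that transfer. The only minor remark is that your displayed sum should in principle run over all $k\geq 0$ (with the convention $V_\ell(\emptyset)=0$), not just $k\geq d+1$, but since you immediately restrict to the window $[n/2,2n]$ this has no effect on the argument.
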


The result of Theorem \ref{thm:Main} and Corollary \ref{cor:PoissonPolytopes} can be regarded as the missing piece of the jigsaw in the theory of Gaussian polytopes. Let us mention some of the implications that are now immediate:
\begin{itemize}
\item[-] \textit{Central limit theorems}. As explained in \cite{BaranyVu,CalkaYukich}, the positivity of the limiting variance is the only missing piece in the proof of the central limit theorem for the normalized intrinsic volumes of $\Pi_n$. The result follows by the methods developed in \cite{BaranyVu,CalkaSchreiberYukich,CalkaYukich}. Moreover, a de-Poissonization argument similar to that in \cite{BaranyVu} leads to the corresponding result for $K_n$; we omit the details.

\item[-] \textit{Concentration inequalities}. As explained in the recent work \cite{GroteThaele}, the positivity of the limiting variance is the only missing ingredient in the proof of a concentration inequality for $V_\ell(\Pi_n)$. The precise form of such an inequality can now be determined from \cite[Theorem 3.1]{GroteThaele}: For any $\ell\in\{1,\ldots,d\}$ one can find a constant $c\in(0,\infty)$ only depending on $d$ and on $\ell$ such that
\begin{align*}
&\bP\big(|V_\ell(\Pi_n)-\bE[V_\ell(\Pi_n)]|\geq y\, \sqrt{\bV[V_\ell(\Pi_n)]}\,\big)\\
&\qquad \qquad \leq 2\exp\Big(-{1\over 4}\min\Big\{{y^2\over 2^{2d+\ell+5}},c\,(\log n)^{d-1\over 4(2d+\ell+5)}\, y^{1\over 2d+\ell+5}\Big\}\Big)
\end{align*}
for all $y\geq 0$ and sufficiently large $n.$

\item[-] \textit{Marcinkiewicz-Zygmund-type strong laws of large numbers.} The concentration inequality for $V_\ell(\Pi_n)$ mentioned in the previous paragraph can directly be used to derive Marcinkiewicz-Zygmund-type strong laws of large numbers along the lines of the proof of \cite[Theorem 1.3]{GroteThaele}: For any $\ell\in\{1,\ldots,d\}$ and $p > 1-{d+3\over\ell}$ one has that
\begin{align*}
\frac{V_\ell(\Pi_n) - \bE [V_\ell(\Pi_n)]}{(\log n)^{p \frac{\ell}{2}}} \longrightarrow 0 
\end{align*}
with probability one, as $n\to\infty$. Using the monotonicity of intrinsic volumes and a simple coupling argument, one easily verifies that the same result also holds with $\Pi_n$ replaced by $K_n$. In that form, this refines the ordinary strong law of large numbers from \cite[Corollary 1.2]{HugReitzner}, which corresponds to the special case $p=1$.

\item[-] \textit{Moderate deviations}. Moderate deviations for the volume and the face numbers of the Gaussian Poisson polytopes $\Pi_n$ have also been investigated in \cite{GroteThaele}. Again, the only missing piece for the extension of these results to the intrinsic volumes is the positivity of the limiting variances; we omit the details.
\end{itemize}

\begin{remark}\rm 
Let $\l>0$ be an arbitrary real number, let $\eta_\l$ be a Poisson point process on $\RRd$ with intensity measure $\l\g_d$ and denote by $\Pi_\l$ the random convex hull induced by $\eta_\l$. Using the monotonicity of intrinsic volumes and a simple coupling argument, one easily verifies that the result of Corollary \ref{cor:PoissonPolytopes} continues to hold with $\Pi_n$ and $\log n$ replaced by $\Pi_\l$ and $\log\l$, respectively. The same comment applies to the central limit theorem, the concentration inequalities, the Marcinkiewicz-Zygmund-type strong laws of large numbers and to the moderate deviations mentioned above.
\end{remark}

The rest of this paper is structured as follows. In Section \ref{sec:Preparations} we recall the essential steps of a geometric construction from \cite{BaranyVu} and proof some auxiliary results that are needed in the proof of Theorem \ref{thm:Main}. The latter is the content of the final Section \ref{sec:ProofMain}.

\section{Preparations}\label{sec:Preparations}

\subsection{Notation}

The symbols $\|\,\cdot\,\|$ and $\lan\,\cdot\,,\,\cdot\,\ran$ are used for the Euclidean norm and scalar product in $\RRd$, respectively. Moreover, for a set $B\subset\RRd$ we write $[B]$ for the convex hull of $B$. We denote the $d$-dimensional unit ball by $\BB^d:=\{x\in\RRd:\|x\|\leq 1\}$ and write $\SSd:=\{x\in\RRd:\|x\|=1\}$ for the corresponding unit sphere. The normalized surface measure on $\SSd$ is denoted by $\nu_{\SSd}$. Further, for a point $z\in\RRd\setminus\{0\}$ and $\alpha\in[0,\pi/2]$ we write $C(z,\alpha)$ for the closed circular cone whose axis is the halfline $\{tz: t\ge 0\}$ and whose angle is $\alpha$. More formally, if $\sphericalangle(z,x)$ stands for the ordinary angle between $z$ and another point $x\in\RRd$, $C(z,\alpha)$ is given by $C(z,\alpha):=\{x\in\RRd:\sphericalangle (x,z)\le \alpha\}$.

Our underlying probability space is $(\O,\cA,\bP)$ and we implicitly assume that it is rich enough to carry all the random objects we consider in this paper. By $\bE[\,\cdot\,]$ we denote expectation (integration) with respect to $\bP$ and $\bV[\,\cdot\,]$ stands for the variance of the argument random variable. The indicator function of an event $A\in\cA$ is denoted by ${\bf 1}_A$.

For two sequences $(a_n:n\in\NN)$ and $(b_n:n\in\NN)$ we write $a_n\ll b_n$ (or $a_n\gg b_n$) if we can find a constant $c\in(0,\infty)$ not depending on $n$ and an index $n_0\in\NN$ such that $a_n\leq c\,b_n$ (or $a_n\geq c\,b_n$) for all $n\geq n_0$. Finally, $a_n\approx b_n$ means that $a_n\ll b_n\ll a_n$.

In this paper constants are denoted by $c_1,c_2,\ldots$ It is implicitly assumed that these constants are finite and strictly positive, and only depend on the space dimension $d$, unless otherwise stated. 

\subsection{A geometric construction}

In this section we recall a geometric construction as well as some of the results already obtained \cite{BaranyVu} that we use below. We define
$$
r=r(n):=\sqrt{2\log n-\log\log n}\,,\qquad n\in\NN\,,
$$ 
and denote by $\SS(r):=\{x\in\RRd:\|x\|=r\}$ the centred sphere of radius $r$. By $y_1,\ldots,y_m\in\SS(r)$ we denote a maximal system of points such that $\|y_i-y_j\|\geq 2c_1$ for some sufficiently large $c_1$. A simple volume comparison argument provides an estimate for the size of such a set, see \cite[Claim 5.1]{BaranyVu}:

\begin{lemma}\label{lem:EstimateM}
One has that $m\approx (\log n)^{d-1\over 2}$.
\end{lemma}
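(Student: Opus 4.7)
The plan is to prove matching upper and lower bounds on $m$ by a standard packing-and-covering argument on the sphere $\SS(r)$. The two key observations are: (i) by maximality of the system $\{y_1,\ldots,y_m\}$, the spherical caps on $\SS(r)$ of chordal radius $2c_1$ centred at the points $y_i$ cover $\SS(r)$ (else some point could be added to the system), and (ii) by the separation assumption $\|y_i-y_j\|\geq 2c_1$, the spherical caps of chordal radius $c_1$ centred at the $y_i$ are pairwise disjoint.

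Next, I would estimate the surface area of a spherical cap of chordal radius $\rho$ on the sphere $\SS(r)$ of radius $r=\sqrt{2\log n-\log\log n}$. Since $r\to\infty$ while $c_1$ is a fixed constant, the relevant angular radius $\theta$ satisfies $\rho=2r\sin(\theta/2)$ with $\theta\approx \rho/r\to 0$. A direct computation then yields that the $(d-1)$-dimensional area of such a cap is comparable to $r^{d-1}\theta^{d-1}\approx \rho^{d-1}$, uniformly for all large $n$, with constants depending only on $d$.

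Combining these estimates with the fact that the total surface area of $\SS(r)$ equals $d\kappa_d r^{d-1}$, the covering property in (i) gives $m\cdot c\,(2c_1)^{d-1}\geq d\kappa_d r^{d-1}$, hence $m\gg r^{d-1}$, while the disjointness property in (ii) gives $m\cdot c'c_1^{d-1}\leq d\kappa_d r^{d-1}$, hence $m\ll r^{d-1}$. Finally, the definition of $r$ gives $r^{d-1}=(2\log n-\log\log n)^{(d-1)/2}\approx (\log n)^{(d-1)/2}$ as $n\to\infty$, yielding the claim.

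I do not anticipate any serious obstacle here; the only point requiring a little care is to ensure that the small-angle approximation for the cap area is valid uniformly in $n$, but since $c_1$ is fixed and the angular radii $c_1/r$ and $2c_1/r$ tend to zero as $n\to\infty$, this holds automatically for all sufficiently large $n$.
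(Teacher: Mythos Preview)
Your proposal is correct and is precisely the ``simple volume comparison argument'' the paper invokes (referring to \cite[Claim 5.1]{BaranyVu}): maximality gives a covering by caps of chordal radius $2c_1$, the separation condition gives a packing by caps of chordal radius $c_1$, and since each such cap has $(d-1)$-dimensional area of constant order while $\SS(r)$ has area $\approx r^{d-1}\approx(\log n)^{(d-1)/2}$, the two-sided bound follows. There is nothing to add.
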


\begin{figure}[t]
\centering
\begin{overpic} 
[width=0.6\textwidth]{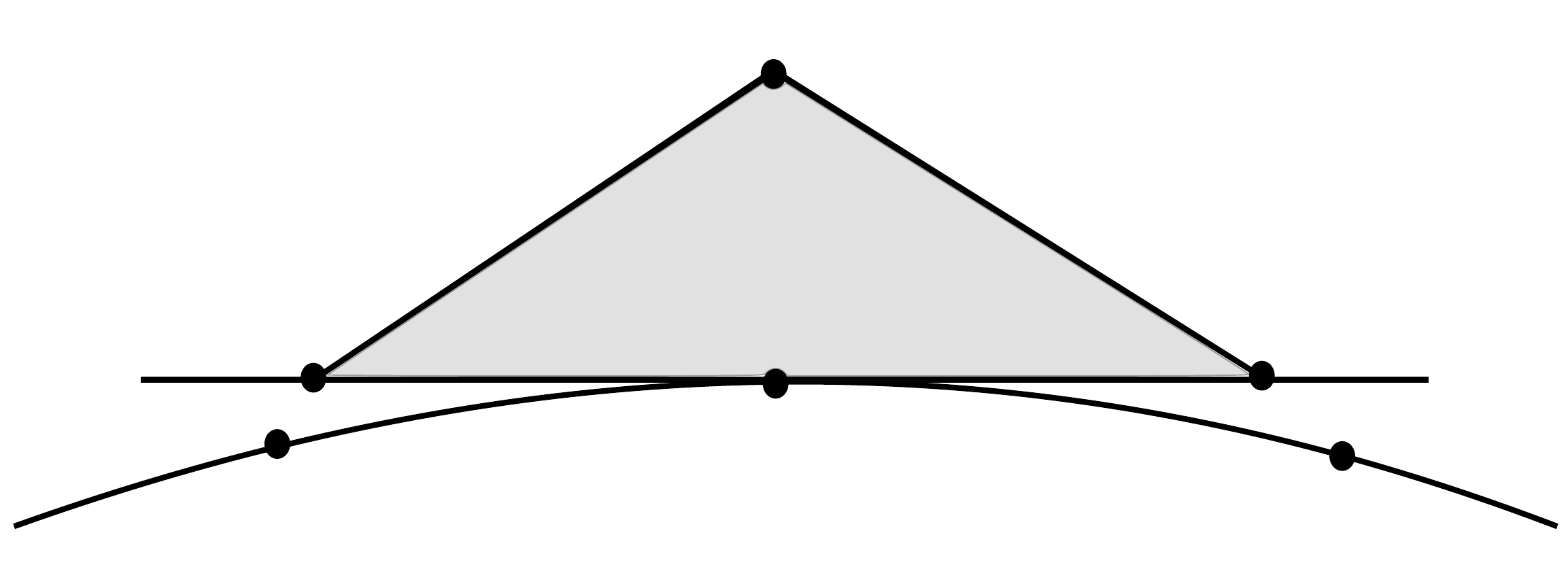} 
\put(0,-5){\scriptsize $\SS(r)$} 
\put(10,30){\scriptsize $H_i$} 
\put(45,10){\scriptsize $y_{i-1}$} 
\put(132,20){\scriptsize $y_i$} 
\put(220,10){\scriptsize $y_{i+1}$} 
\put(47,40){\scriptsize $y_i^1$} 
\put(220,40){\scriptsize $y_i^2$} 
\put(132,95){\scriptsize $y_i^0$} 
\put(132,50){\scriptsize $\D_i$} 
\end{overpic}
\caption{Construction of the simplices $\D_i$.}
\label{fig1}
\end{figure}

For each $i\in\{1,\ldots,m\}$ define $y_i^0:=(1+r^{-2})y_i$ and notice that $\|y_i-y_i^0\|=r^{-1}$. Let further for $i\in\{1,\ldots,m\}$, $H_i:=\{x\in\RRd:\lan x,y_i\ran = r\}$ be the tangent hyperplane of $\SS(r)$ at $y_i$ and fix a regular simplex in $H_i$ whose vertices $y_i^1,\ldots,y_i^d$ are chosen from the $(d-2)$-dimensional sphere $\SS^{d-2}(y_i,\sqrt{2})$ of radius $\sqrt{2}$ in $H_i$ centred at $y_i$. (Thus $\SS(r)=\SS(0,r)$ but we keep the simpler notation for $\SS(r)$.) The simplex $\D_i:=[y_i^0,y_i^1,\ldots,y_i^d]$ is the convex hull of $y_i^0$ and the points $y_i^1,\ldots,y_i^d\in H_i$, see Figure \ref{fig1}. It is not difficult to estimate the volume $V_d(\D_i)$ and the Gaussian measure $\g_d(\D_i)$ of these simplices, see \cite[Claim 5.2]{BaranyVu}:

\begin{lemma}\label{lem:ContentDeltaI}
For each $i\in\{1,\ldots,m\}$ one has that $V_d(\D_i)\approx(\log n)^{-{1\over 2}}$ and $\g_d(\D_i)\approx n^{-1}$.
\end{lemma}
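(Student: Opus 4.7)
My plan is to reduce both statements to direct geometric calculations using convexity and the explicit form of $\D_i$. For the volume, I will view $\D_i$ as a cone with apex $y_i^0$ over the regular $(d-1)$-simplex $[y_i^1,\ldots,y_i^d]\subset H_i$. Since this base is inscribed in a sphere of radius $\sqrt{2}$ lying in $H_i$, its $(d-1)$-dimensional volume is a positive constant depending only on $d$. The height of the cone equals $\dist(y_i^0,H_i)=\|y_i^0-y_i\|=r^{-1}$, because $y_i^0$ lies on the outward normal to $H_i$ at $y_i$. Combining these two facts gives
\[
V_d(\D_i)=\frac{1}{d}\cdot(\text{base volume})\cdot r^{-1}\approx r^{-1}\approx(\log n)^{-1/2}.
\]

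For the Gaussian measure I will show that $\varphi_d$ is comparable to $\varphi_d(y_i)=(2\pi)^{-d/2}e^{-r^2/2}$ throughout $\D_i$, up to multiplicative constants depending only on $d$. For the upper bound, I use that $y_i^1,\ldots,y_i^d\in H_i$ satisfy $\lan y_i^j,y_i\ran=r^2$, while $\lan y_i^0,y_i\ran=r^2+1$, so by linearity every $x\in\D_i$ has $\lan x,y_i\ran\geq r^2$; Cauchy-Schwarz then forces $\|x\|\geq r$, and hence $\varphi_d(x)\leq\varphi_d(y_i)$. For the lower bound, a direct computation gives $\|y_i^0\|^2=r^2+2+r^{-2}$ and $\|y_i^j\|^2=r^2+2$ for $j=1,\ldots,d$; since $\|\cdot\|^2$ is convex its maximum on $\D_i$ is attained at a vertex, so $\|x\|^2\leq r^2+3$ on $\D_i$ for $n$ large enough, yielding $\varphi_d(x)\geq e^{-3/2}\varphi_d(y_i)$.

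Combining the two estimates gives $\g_d(\D_i)\approx V_d(\D_i)\cdot e^{-r^2/2}$. Substituting $r^2=2\log n-\log\log n$ produces $e^{-r^2/2}=n^{-1}\sqrt{\log n}$, so
\[
\g_d(\D_i)\approx(\log n)^{-1/2}\cdot n^{-1}\sqrt{\log n}=n^{-1},
\]
as claimed. I anticipate no serious obstacles: everything reduces to explicit vertex computations and convexity. The key design feature to notice is that the extra vertex $y_i^0$ sits only $r^{-1}$ past $y_i$ along the normal direction, which is exactly what keeps the cone's base-to-height ratio right for the volume estimate and simultaneously keeps the Gaussian density essentially constant across $\D_i$ for the second estimate.
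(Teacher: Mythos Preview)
Your proof is correct. The paper does not actually prove this lemma; it simply cites \cite[Claim 5.2]{BaranyVu}. Your argument---computing the cone volume from base and height, then sandwiching the Gaussian density on $\D_i$ between constant multiples of $\varphi_d(y_i)$ via the convexity of $\|\cdot\|^2$ and the Cauchy--Schwarz inequality---is exactly the kind of direct computation one expects for this claim, and all the vertex calculations check out. One small remark: you use $\lan y_i^j,y_i\ran=r^2$, which is the correct relation for the tangent hyperplane of $\SS(r)$ at $y_i$, whereas the paper's displayed definition $H_i=\{x:\lan x,y_i\ran=r\}$ appears to contain a typo; your reading is the intended one.
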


For each $i\in\{1,\ldots,m\}$ and $j\in\{0,\ldots,d\}$ we let $\D_i^j$ be a homothetic copy of $\D_i$ with $y_i^j$ being the centre of the homothety and the factor being a sufficiently small number $c_2$, that is, $\D_i^j:=y_i^j+c_2(\D_i-y_i^j)$. 

Let $D_i$ be the the cone $D_i:={\rm pos}(\{y_i^j-y_i^0:j\in\{1,\ldots,d\}\})$, where we write ${\rm pos}(\,\cdot\,)$ for the positive hull of the argument set. This is the internal cone at vertex $y_i^0$ of the simplex $\D_i$, which has a simple structure because its base is a $(d-1)$-dimensional regular simplex and the opposite vertex $y_i^0$ is at height $r^{-1}$ over this base exactly above its centre. In particular, one can check easily that 
\begin{equation}\label{eq:Dcones}
C\Big(y_i-y_i^0,\arctan{\sqrt 2 \,r \over d-1}\Big) \subset D_i \subset  C\big(y_i-y_i^0,\arctan\sqrt 2 \,r\big)\,. 
\end{equation}

Since each $\D_i^j$ is only a homothetic copy of $\D_i$ with a scaling factor not depending on $n$, the following holds by construction:

\begin{lemma}\label{lem:ContentDeltaIJ}
For each $i\in\{1,\ldots,m\}$ and $j\in\{0,\ldots,d\}$ one has that $V_d(\D_i^j)\approx(\log n)^{-{1\over 2}}$ and $\g_d(\D_i^j)\approx n^{-1}$.
\end{lemma}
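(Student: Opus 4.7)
The plan is to exploit the fact that $\D_i^j$ is a homothetic image of $\D_i$ with constant scaling factor $c_2$, so that both the volume and the Gaussian measure of $\D_i^j$ scale predictably from the corresponding quantities for $\D_i$ given by Lemma \ref{lem:ContentDeltaI}. The volume part is immediate: homothety gives $V_d(\D_i^j)=c_2^d\,V_d(\D_i)$, and since $c_2$ is a constant independent of $n$, Lemma \ref{lem:ContentDeltaI} yields $V_d(\D_i^j)\approx(\log n)^{-1/2}$.

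For the Gaussian measure the homothety does not act nicely on $\varphi_d$, so I would argue instead that the density is essentially constant on $\D_i$, namely $\varphi_d(x)\approx \exp(-r^2/2)=n^{-1}(\log n)^{1/2}$ uniformly for $x\in\D_i$. The key geometric estimate is $\|x\|^2=r^2+O(1)$ for every $x\in\D_i$. To verify it, I would decompose $x-y_i$ along the radial direction $y_i/r$ and its orthogonal complement. For any vertex $y_i^k$ with $k\geq 1$ one has $y_i^k-y_i\in H_i-y_i$, which is orthogonal to $y_i$, so it contributes no radial component; for $k=0$ the identity $y_i^0-y_i=r^{-2}y_i$ shows that this vertex contributes radial component exactly $r^{-1}$. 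Writing $x=\sum_{k=0}^d\alpha_k y_i^k$ as a convex combination, the radial component of $x-y_i$ is thus $\alpha_0 r^{-1}\in[0,r^{-1}]$ and the transverse component has length at most $\sqrt 2$. Expanding $\|x\|^2=\|y_i\|^2+2\lan y_i,x-y_i\ran+\|x-y_i\|^2$ and using $\|y_i\|=r$ then gives $\|x\|^2=r^2+O(1)$, as desired.

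Combining this density estimate with $\D_i^j\subset\D_i$ (valid since $c_2\in(0,1)$ and $y_i^j\in\D_i$) and with the volume estimate above, I would conclude
$$
\g_d(\D_i^j)=\int_{\D_i^j}\varphi_d(x)\,\dint x\approx V_d(\D_i^j)\cdot n^{-1}(\log n)^{1/2}\approx (\log n)^{-1/2}\cdot n^{-1}(\log n)^{1/2}=n^{-1}.
$$
I do not anticipate any substantial obstacle; the only care required is in tracking the multiplicative $O(1)$ factors hidden in $\approx$, so that no spurious power of $r=\sqrt{2\log n-\log\log n}$ sneaks in through the cross term $2\lan y_i,x-y_i\ran$. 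The radial-component bookkeeping above is exactly what rules this out.
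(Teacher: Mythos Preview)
Your proposal is correct and follows exactly the approach the paper has in mind: the paper's entire argument is the one-sentence remark preceding the lemma that each $\D_i^j$ is a homothetic copy of $\D_i$ with a scaling factor independent of $n$, so the result is inherited from Lemma~\ref{lem:ContentDeltaI}. You have simply spelled out the implicit density estimate $\varphi_d(x)\approx n^{-1}(\log n)^{1/2}$ on $\D_i$ (via the radial/transverse decomposition showing $\|x\|^2=r^2+O(1)$), which is precisely what underlies the Gaussian-measure part of Lemma~\ref{lem:ContentDeltaI} in \cite{BaranyVu}.
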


For each $i\in\{1,\ldots,m\}$ and $j\in\{0,\ldots,d\}$ let $z_i^j$ be an arbitrary point in $\D_i^j$ and define the cone $C_i:={\rm pos}(\{z_i^j-z_i^0:j\in\{1,\ldots,d\}\})$. We recall the following fact about these cones from \cite[Lemma 5.4]{BaranyVu}, which ensures a certain independence property used below:

\begin{lemma}\label{lem:ConesCi}
One can choose the constant $c_1$ in the above construction sufficiently large and $c_2$ sufficiently small such that for each $i\in\{1,\ldots,m\}$ the translated cone $z_i^0+C_i$ contains all simplices $\Delta_k$ with $k\in\{1,\ldots,m\}\setminus\{i\}$.
\end{lemma}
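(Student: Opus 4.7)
The plan is to exhibit a circular cone $\widetilde C_i$ with axis $u_i := (y_i - y_i^0)/\|y_i - y_i^0\|$ that is contained in $C_i$ once $c_2$ is small enough, and contains each translated simplex $\Delta_k - z_i^0$ ($k\neq i$) once $c_1$ is large enough. A convenient choice is
\[ \widetilde C_i := C\bigl(y_i - y_i^0,\,\arctan\tfrac{\sqrt{2}\,r}{2(d-1)}\bigr),\]
a circular cone slightly narrower than the inner cone of $D_i$ in~\eqref{eq:Dcones}. The inclusion $\widetilde C_i\subset C_i$ follows by a perturbation argument for simplicial cones: since $\|z_i^j - y_i^j\|\leq c_2\diam(\Delta_i)$ while $\|y_i^j - y_i^0\|=\sqrt{2+r^{-2}}\geq\sqrt 2$, the generators $z_i^j - z_i^0$ of $C_i$ differ from those of $D_i$ by $O(c_2)$ in norm, and for $c_2$ sufficiently small depending only on $d$ one has $\widetilde C_i \subset C_i$.

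The heart of the argument is the vertex-angle estimate. By convexity of $\widetilde C_i$ it suffices to show $y_k^j - z_i^0 \in \widetilde C_i$ for every vertex $y_k^j$, $j\in\{0,\ldots,d\}$. Setting $\vartheta := \sphericalangle(y_i, y_k)$ and $t := 1-\cos\vartheta\geq 2c_1^2/r^2$ (the latter being the translation of $\|y_k - y_i\|\geq 2c_1$), the decomposition of $y_k - y_i^0$ along and orthogonal to $u_i$ yields
\[ \tan\sphericalangle(y_k - y_i^0,\,u_i) \;=\; \frac{\sqrt{t(2-t)}}{t+r^{-2}}.\]
An elementary derivative computation shows the right-hand side is maximised at $t = 1/(r^2+1)$ and decreasing beyond; evaluating at the admissible left endpoint $t = 2c_1^2/r^2$ (which lies to the right of the maximum for $c_1\geq 1$) yields $\tan\sphericalangle(y_k - y_i^0, u_i) \leq 2c_1 r/(2c_1^2+1) \leq r/c_1$. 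Passing from $y_k$ to a vertex $y_k^j$ of $\Delta_k$ introduces a perturbation of norm $O(1)$ whose $u_i$-component, crucially, is of order $\sin\vartheta = O(c_1/r)$ (since $y_k^j - y_k\perp y_k$ for $j\geq 1$, with an even smaller correction for $j=0$); combined with the perturbation $z_i^0 - y_i^0$ of norm $O(c_2)$, a careful bookkeeping gives $\tan\sphericalangle(y_k^j - z_i^0, u_i) \leq c'\,r/c_1$ for some $c' = c'(d)$. Choosing $c_1$ large enough (depending only on $d$) therefore forces $\tan\sphericalangle(y_k^j - z_i^0, u_i) < \sqrt{2}\,r/(2(d-1))$ for all $n$ sufficiently large, whence $y_k^j - z_i^0\in\widetilde C_i\subset C_i$ and $\Delta_k\subset z_i^0 + C_i$.

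The main delicate point is the asymptotic balance: both the aperture tangent $\sqrt{2}\,r/(2(d-1))$ of $\widetilde C_i$ and the vertex-angle tangent $r/c_1$ scale linearly in $r\to\infty$, so only the sharp optimisation of $t\mapsto\sqrt{t(2-t)}/(t+r^{-2})$ in the critical regime $t\asymp c_1^2/r^2$ produces a bound of the form $r/c_1$ (decaying in $c_1$) rather than a naive bound of order $r$; this is what decouples the choice of $c_1$ from $n$. Analogously, since the axis component $r^{-1}$ of each generator of $D_i$ vanishes as $n\to\infty$, one must carefully verify that the $O(c_2)$ perturbation from $D_i$ to $C_i$ does not tilt the cone past the target $\widetilde C_i$, which is what forces $c_2$ to be chosen small enough depending only on $d$.
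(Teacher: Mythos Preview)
The paper does not prove this lemma itself; it merely cites \cite[Lemma~5.4]{BaranyVu}. Your direct argument is on the right track, but Step~1 (the inclusion $\widetilde C_i\subset C_i$) has a genuine gap. You argue that since $\|z_i^j-y_i^j\|\le c_2\,\diam(\Delta_i)=O(c_2)$ while $\|y_i^j-y_i^0\|\ge\sqrt2$, the generators of $C_i$ are $O(c_2)$-perturbations of those of $D_i$, and hence for small $c_2$ the inclusion persists. But this is false in general: each generator $y_i^j-y_i^0$ has $u_i$-component exactly $r^{-1}$, so an \emph{arbitrary} perturbation of norm $O(c_2)$ (indeed, of norm $2r^{-1}\to 0$) in the $u_i$-direction can make this component negative and flip the cone entirely to the wrong half-space. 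No fixed $c_2>0$ would survive $n\to\infty$ under that reasoning. Your final paragraph flags the step as delicate but still speaks of an ``$O(c_2)$ perturbation'' and asserts that choosing $c_2$ small depending only on $d$ suffices; that is precisely what is not justified.

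What saves the argument is a structural fact you do not state: because $\Delta_i^j=y_i^j+c_2(\Delta_i-y_i^j)$ and $\Delta_i$ has height $r^{-1}$ in the $u_i$-direction, the $u_i$-component of each displacement $z_i^j-y_i^j$ lies in $[-c_2 r^{-1},c_2 r^{-1}]$ (not merely $O(c_2)$). Consequently the $u_i$-component of $z_i^j-z_i^0$ stays in $[(1-2c_2)r^{-1},r^{-1}]$, while the orthogonal component is $\sqrt2\,e_j+O(c_2)$. After the anisotropic rescaling $u_i\mapsto r\,u_i$, the problem becomes a genuinely $n$-independent perturbation of the fixed cone generated by $u_i+\sqrt2\,e_j$, and then your continuity argument is valid. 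The same observation (that the $u_i$-component of $z_i^0-y_i^0$ is $O(c_2 r^{-1})$) is also needed in Step~2 when you replace $y_i^0$ by $z_i^0$. Once this point is made explicit, the rest of your proof---in particular the sharp estimate $\tan\sphericalangle(y_k-y_i^0,u_i)=\sqrt{t(2-t)}/(t+r^{-2})\le r/c_1$---is correct and gives a clean quantitative route to the conclusion.
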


Observe further that the simplices $[z_i^0,\ldots,z_i^d]$ and $\D_i$ are very close to each other if the factor of homothety $c_2$ is small enough. So relations (\ref{eq:Dcones}) imply that  
\begin{equation}\label{eq:Ccones}
C_i^1:=C\Big(y_i-z_i^0, \arctan{r \over d-1}\Big)\subset C_i  \subset C(y_i-z_i^0,\arctan 2 r)=:C_i^2\,.
\end{equation}

Next, for $i\in\{1,\ldots,m\}$ and $j\in\{1,\ldots,d\}$ we denote by $H_i^j$ the half-space containing $\D_i^k$ for all $k\in\{0,\ldots,d\}\setminus\{0,j\}$, not containing $\D_i^0$ and $\D_i^j$, and such that the hyperplane bounding $H_i^j$ touches all the simplices $\D_i^0,\ldots,\D_i^d$ except for $\D_i^j$, see Figure \ref{fig:2} (left). We are now in the position to define for each $i\in\{1,\ldots,m\}$ the event $A_i\in\cA$ that precisely one point from the random sample $X_1,\ldots,X_n$ is contained in each simplex of the form $\D_i^j$ and no further point from $X_1,\ldots,X_n$ is contained in $H_i^+\cup H_i^1\cup\ldots\cup H_i^d$, see Figure \ref{fig:2} (right). Here, $H_i^+$ is the half-space bounded by $H_i$ not containing the origin. The following probability estimate is taken from \cite[Lemma 6.2]{BaranyVu}:

\begin{figure}[t]
\centering
\begin{overpic}
[width=0.49\textwidth]{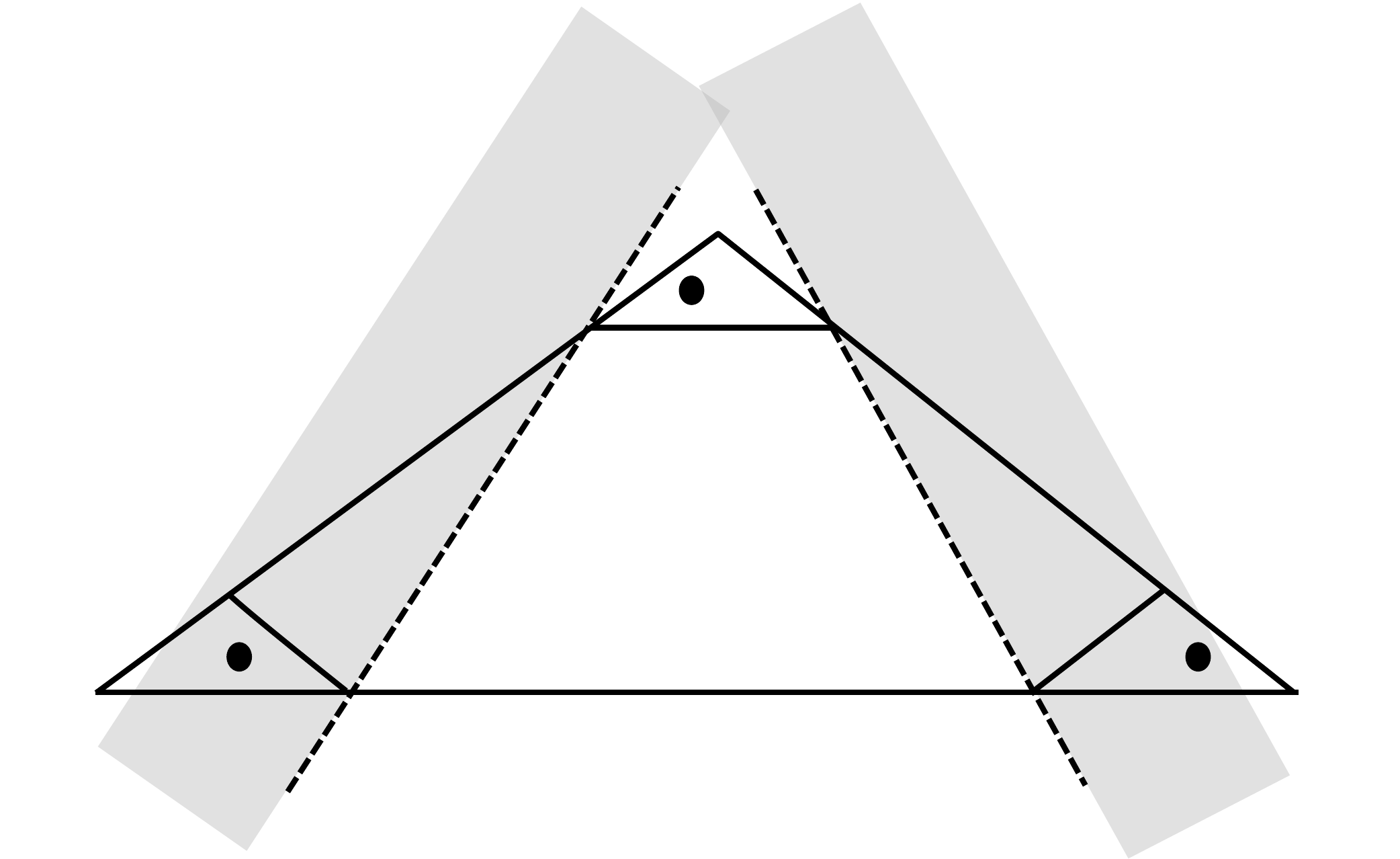} 
\put(110,55){\scriptsize $\D_i$} 
\put(109,103){\scriptsize $\D_i^0$} 
\put(25,15){\scriptsize $\D_i^1$} 
\put(180,15){\scriptsize $\D_i^2$} 
\put(112,88){\scriptsize $z_i^0$} 
\put(27,30){\scriptsize $z_i^1$} 
\put(180,30){\scriptsize $z_i^2$} 
\put(150,75){\scriptsize $H_i^1$} 
\put(60,75){\scriptsize $H_i^2$} 
\end{overpic}
\begin{overpic} 
[width=0.49\textwidth]{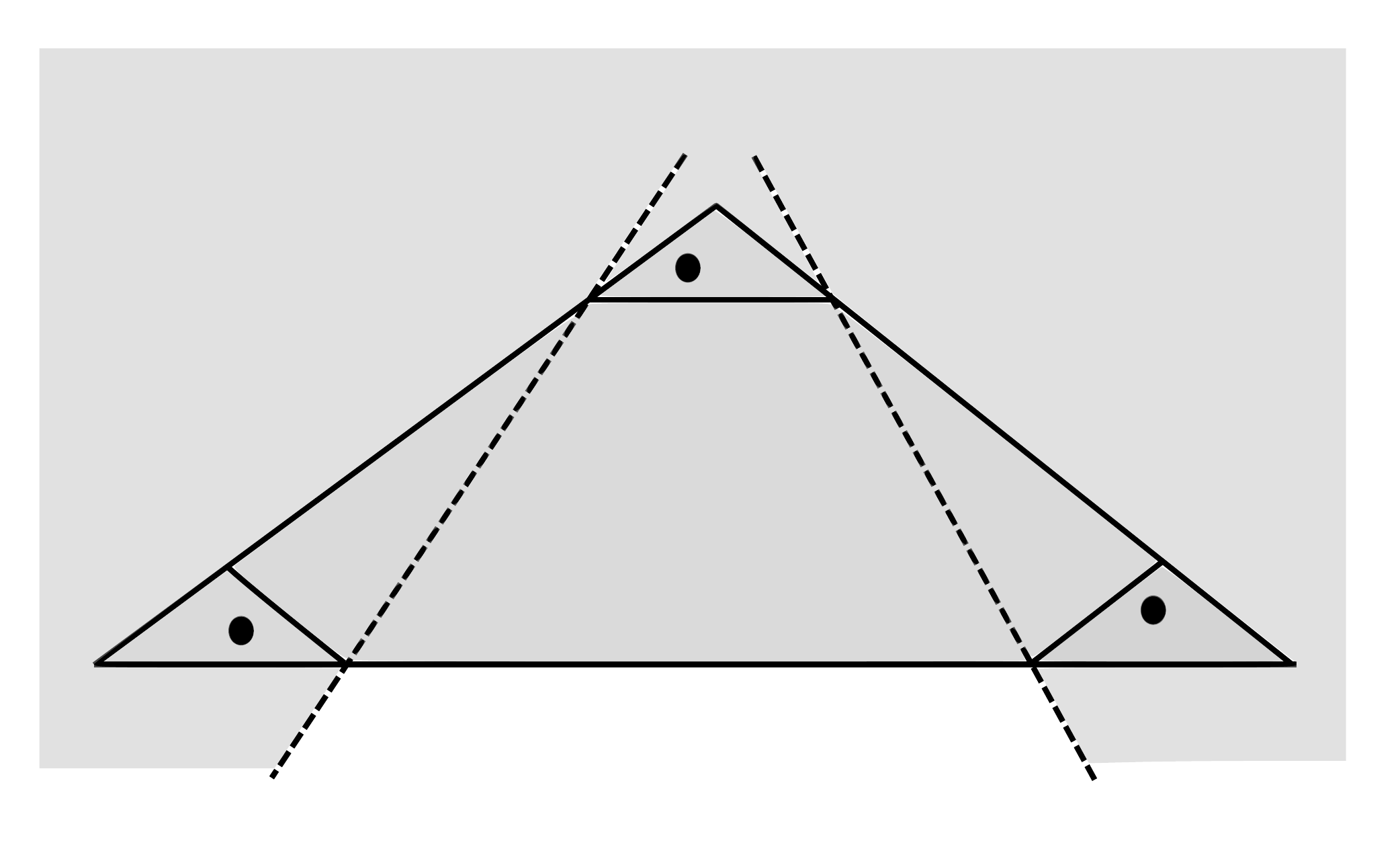} 
\put(113,92){\scriptsize $z_i^0$} 
\put(28,35){\scriptsize $z_i^1$} 
\put(185,34){\scriptsize $z_i^2$} 
\put(110,55){\scriptsize $\D_i$} 
\put(10,90){\scriptsize No points} 
\put(20,80){\scriptsize here}
\put(175,90){\scriptsize No points} 
\put(185,80){\scriptsize here}
\end{overpic}
\caption{The simplices $\D_i^j$, the points $z_i^j$ and the half-spaces $H_i^j$ (left). Illustration of the events $A_i$ (right).}
\label{fig:2}
\end{figure}

\begin{lemma}\label{lem:ProbabilityAi}
There exists a constant $c_3\in(0,1)$ such that $\bP(A_i)\geq c_3$ for all $i\in\{1,\ldots,m\}$.
\end{lemma}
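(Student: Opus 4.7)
The plan is to compute $\bP(A_i)$ via the multinomial formula for i.i.d.\ samples and bound the three resulting factors. First I would check that each $\D_i^j$ lies in the closed half-space $H_i^+$: the point $y_i^0$ is strictly inside $H_i^+$, each $y_i^j$ for $j\ge 1$ lies on the boundary $H_i$, and the small homothety defining $\D_i^j$ pulls everything toward $\D_i\subset H_i^+$. Hence $A_i$ is the event that, among the sample, exactly one point lies in each $\D_i^j$ and the remaining $n-d-1$ points avoid the forbidden region $F:=H_i^+\cup H_i^1\cup\cdots\cup H_i^d$. Writing $p_j:=\g_d(\D_i^j)$ and $Q:=\g_d(F)$, the multinomial formula yields
\begin{equation*}
\bP(A_i) \;=\; \frac{n!}{(n-d-1)!}\,\prod_{j=0}^d p_j\;\cdot\;(1-Q)^{n-d-1}.
\end{equation*}

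The first two factors combine to a uniform positive constant: by Lemma~\ref{lem:ContentDeltaIJ}, $p_j\approx 1/n$, so $\prod_{j=0}^d p_j\approx n^{-(d+1)}$, which cancels $n!/(n-d-1)!\approx n^{d+1}$. The substantially more delicate step is to show $Q=O(1/n)$, after which $(1-Q)^{n-d-1}\ge \exp(-2Q(n-d-1))$ is bounded below by a positive constant. I would use subadditivity $Q\le \g_d(H_i^+)+\sum_{j=1}^d\g_d(H_i^j)$ together with the Gaussian tail bound $\g_d(\{x:\langle x,u\rangle\ge t\})\le e^{-t^2/2}/(t\sqrt{2\pi})$. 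For $H_i^+$, whose bounding hyperplane is at distance $r$ from the origin, the identity $e^{-r^2/2}=\sqrt{\log n}/n$ gives $\g_d(H_i^+)=O(1/n)$ immediately. For $H_i^j$ with $j\ge 1$, I would work in coordinates with $y_i$ along the first axis and compute the distance from the origin to the bounding hyperplane, which in the limit $c_2\to 0$ becomes the facet hyperplane of $\D_i$ opposite $y_i^j$. Using the identities $\sum_k w_k=0$ and $\langle w_k,w_l\rangle=-1/(d-1)$ for the base vertices, a direct calculation gives this facet distance as $\sqrt{2(r^2+1)^2/((d-1)^2+2r^2)}=r+O(1/r)$. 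For $c_2>0$ sufficiently small, the actual tangent hyperplane differs from the facet hyperplane by a rotation of angle $O(c_2)$ about a midpoint lying on the facet; by the rotational symmetry of the regular base around the axis $y_i$, the corresponding shift in the distance from the origin is only $O(c_2/r^2)$, negligible against the $O(1/r)$ main term. Hence $\g_d(H_i^j)=O(1/n)$ and thus $Q=O(1/n)$.

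Combining the three bounds yields $\bP(A_i)\ge c_3>0$ uniformly in $i$. The step I expect to be the main obstacle is the geometric estimate for $H_i^j$: since the facet hyperplane beats or falls short of the radius $r$ only by $O(1/r)$, a crude perturbation argument producing an error of order $c_2$ in the distance to the origin would inflate $\g_d(H_i^j)$ from $O(1/n)$ to roughly $n^{-1+\Theta(c_2)}$, wrecking the estimate $(1-Q)^n=\Theta(1)$. The rotational symmetry of the regular simplex about its axis $y_i$, built into the choice of $c_2$ in Lemma~\ref{lem:ConesCi}, is precisely what reduces the perturbation to the harmless scale $O(c_2/r^2)$, and it is this fine cancellation that the argument ultimately rests on.
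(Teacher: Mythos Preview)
The paper does not give its own proof of this lemma; it simply cites \cite[Lemma 6.2]{BaranyVu}. Your multinomial decomposition and the reduction to the estimate $\g_d(H_i^+\cup H_i^1\cup\cdots\cup H_i^d)=O(1/n)$ is exactly the natural route and almost certainly the one taken in \cite{BaranyVu}; the treatment of $H_i^+$ via the Gaussian tail and the facet computation in the $c_2\to 0$ limit are both fine (modulo the minor slip $\langle w_k,w_l\rangle=-2/(d-1)$ rather than $-1/(d-1)$, which does not affect your final distance formula).

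The one place that needs tightening is the perturbation step for $c_2>0$. Your heuristic (``rotation of angle $O(c_2)$ about a midpoint'' yielding a distance shift of only $O(c_2/r^2)$) is not quite right and the appeal to Lemma~\ref{lem:ConesCi} is a red herring. What actually makes the argument work is the $S_{d-1}$--symmetry permuting $\{y_i^k:k\neq 0,j\}$: it forces the unit normal of the bounding hyperplane of $H_i^j$ to lie in the two--dimensional span of $y_i$ and $w_j$, so one can parametrize it as $n=(\cos\theta,\sin\theta\, w_j/\sqrt{2})$ and solve the two tangency conditions directly. One finds
\[
\tan\theta \;=\; -\,\frac{(1-c_2)(d-1)}{\sqrt{2}\,r\,\bigl(1-c_2(d+1)\bigr)}\;=\;O(1/r)
\]
uniformly for small $c_2$, and hence the signed distance from the origin is $t=(1-c_2)(r+1/r)\cos\theta+c_2\bigl(r\cos\theta-\sqrt{2}\sin\theta/(d-1)\bigr)=r+O(1/r)$. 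So the perturbation in the distance is $O(c_2/r)$, not $O(c_2/r^2)$; this is weaker than you claimed but still ample, since any distance of the form $r+O(1/r)$ gives $\g_d(H_i^j)=O(1/n)$. With this correction your argument is complete.
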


The facts summarized so far have been used in \cite{BaranyVu} to prove a lower variance bound for the volume $V_d(K_n)$ of $K_n$. Since we are interested in all intrinsic volumes $V_1(K_n),\ldots,V_d(K_n)$, a refinement is necessary to obtain such bounds. In fact, we now follow and adapt the method already applied in \cite{BaranyFodorVigh,BorotzkyFodorReitznerVig,ReitznerCLT} to handle the more general situation.

\subsection{The effect of local perturbations}

Let $z\in\SSd$ and $G$ be a measurable subset of $\GG(d,\ell)$ for some $\ell\in\{0,\ldots,d\}$. The angle $\sphericalangle(z,G)$ between $z$ and $G$ is defined as $\min\{\sphericalangle(z,x):x\in L,L\in G\}$, where $\sphericalangle(z,x)=\arccos{\lan x,z\ran\over\|x\|}$ is the ordinary angle between $z$ and $x$. We observe the following geometric fact, see also \cite[Lemma 1]{BaranyFodorVigh}:

\begin{lemma}\label{lem:Angle}
Let $z\in\SSd$ and $\ell\in\{1,\ldots,d\}$. One can find a constant $c_4\in(0,\infty)$ only depending on $d$ and on $\ell$ such that  $$\nu_\ell(\{L\in\GG(d,\ell):\sphericalangle(z,L)\leq a\})\gg a^{d-\ell}$$
for all $0<a<c_4$.
\end{lemma}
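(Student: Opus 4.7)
The plan is to reduce the claim to the classical distribution of the squared length of the projection of a fixed unit vector onto a uniformly random subspace. The case $\ell = d$ is trivial, because $\GG(d,d) = \{\RRd\}$ and $\sphericalangle(z,\RRd) = 0$, so from now on I assume $\ell \in \{1,\ldots,d-1\}$ and set $k := d-\ell \ge 1$.

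By the $O(d)$-invariance of $\nu_\ell$, the measure of the set in question is independent of the choice of $z \in \SSd$, so I fix $z$. Since $\|z\| = 1$, elementary trigonometry yields $\sphericalangle(z,L) = \arcsin \|P_{L^\perp} z\|$, where $P_{L^\perp}$ denotes orthogonal projection onto $L^\perp$. Hence $\sphericalangle(z,L) \le a$ is equivalent to $\|P_{L^\perp} z\| \le \sin a$. The involution $L \mapsto L^\perp$ is a measure-preserving bijection between $(\GG(d,\ell),\nu_\ell)$ and $(\GG(d,k),\nu_k)$, so after setting $M := L^\perp$ the claim becomes
$$\nu_k(\{M \in \GG(d,k) : \|P_M z\|^2 \le \sin^2 a\}) \gg (\sin a)^k.$$

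Next, I would invoke the classical distributional fact that for $z \in \SSd$ fixed and $M$ distributed according to $\nu_k$ on $\GG(d,k)$, the random variable $\|P_M z\|^2$ follows a Beta distribution with parameters $(k/2,\ell/2)$. This is a standard consequence of $O(d)$-invariance, obtained by representing $M$ through a uniformly random orthonormal $k$-frame and computing the law of the sum of the squared first coordinates; see for instance standard texts on multivariate statistics or integral geometry. Writing $B$ for the Beta function, it follows that for every $t \in (0,1/2]$,
$$\PP(\|P_M z\|^2 \le t) = \frac{1}{B(k/2,\ell/2)} \int_0^t x^{k/2 - 1}(1-x)^{\ell/2 - 1}\,\dint x \gg t^{k/2},$$
where the last estimate uses that $(1-x)^{\ell/2-1}$ is bounded below by a positive constant (depending only on $\ell$) on $[0,1/2]$ and that $\int_0^t x^{k/2-1}\,\dint x = 2t^{k/2}/k$. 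Applying this with $t = \sin^2 a$ and using $\sin a \ge 2a/\pi$ for $a \in [0,\pi/2]$ yields the required lower bound $\gg a^k = a^{d-\ell}$ as soon as $a \le c_4$, where for example $c_4 := \pi/4$ ensures $\sin^2 a \le 1/2$.

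The only step going beyond elementary trigonometry, rotation invariance, and one-dimensional integration is the Beta distribution of $\|P_M z\|^2$, which is classical; I do not foresee any substantial obstacle.
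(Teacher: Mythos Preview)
Your proof is correct but follows a genuinely different route from the paper's. The paper argues via a flag-space Fubini decomposition: it fibres $\GG(d,\ell)$ over $\GG(d,\ell-1)$, restricts the outer integral to those $(\ell-1)$-planes $M$ contained in $z^\perp$, and then parametrizes the $\ell$-planes $L\supset M$ by a unit direction $u\in\SSd\cap M^\perp$. Since $z\in M^\perp$, the condition $\sphericalangle(z,u)\le a$ cuts out a spherical cap in the $(d-\ell)$-dimensional subsphere $\SSd\cap M^\perp$ of normalized volume $\gg a^{d-\ell}$, and this cap lies inside the event $\{\sphericalangle(z,L)\le a\}$. You instead rewrite $\sphericalangle(z,L)\le a$ as $\|P_{L^\perp}z\|\le\sin a$, pass to $M=L^\perp\in\GG(d,k)$ with $k=d-\ell$, and invoke the classical Beta$(k/2,\ell/2)$ law of $\|P_M z\|^2$ to integrate the density explicitly. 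Your approach is shorter and in fact delivers the exact distribution (hence a matching upper bound for free), at the price of quoting the Beta fact as a black box; the paper's argument stays entirely within elementary integral geometry and spherical-cap volume estimates, but is tailored to produce only the lower bound actually needed.
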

\begin{proof}
For $M\in\GG(d,\ell-1)$ we denote by $\GG(M,\ell)$ the relative Grassmannian of $\ell$-dimensional linear subspaces of $\RRd$ containing $M$. This space is supplied with a unique Haar probability measure $\nu_\ell^M$, see Chapter 7.1 in \cite{SchWeil}. Similarly, we let $\GG(z^\perp,\ell-1)$ be the relative Grassmannian of $(\ell-1)$-dimensional linear subspaces of $\RRd$ that are contained in the hyperplane $z^\perp$ orthogonal to $1$-dimensional linear subspace spanned by $z$. The unique Haar probability measure on $\GG(z^\perp,\ell-1)$ is denoted by $\nu_{\ell-1}^{z^\perp}$. For $M\in\GG(z^\perp,\ell-1)$ let $u\in\SSd\cap M^\perp$ be such that $\sphericalangle(z,u)\leq a$. It is clear that the $\ell$-dimensional linear subspace ${\rm span}(M,u)$ spanned by $M$ and $u$ is contained in the set $\{L\in\GG(d,\ell):\sphericalangle(z,L)\leq a\}$ we are interested in. Formally, using Fubini's theorem for flag spaces (see \cite[Theorem 7.1.1]{SchWeil}) in the second step we write
\begin{align*}
&\nu_\ell(\{L\in\GG(d,\ell):\sphericalangle(z,L)\leq a\})\\
&\qquad= \int_{\GG(d,\ell)}{\bf 1}_{\{\sphericalangle(z,L)\leq a\}}\,\nu_\ell(\dint L)\\
&\qquad=\int_{\GG(d,\ell-1)}\int_{\GG(M,\ell)}{\bf 1}_{\{\sphericalangle(z,L)\leq a\}}\,\nu_{\ell}^M(\dint L)\nu_{\ell-1}(\dint M)\\
&\qquad\geq \int_{\GG(z^\perp,\ell-1)}\int_{\GG(M,\ell)}{\bf 1}_{\{\sphericalangle(z,L)\leq a\}}\,\nu_{\ell}^M(\dint L)\nu_{\ell-1}^{z^\perp}(\dint M)\\
&\qquad\geq\int_{\GG(z^\perp,\ell-1)}\int_{\SSd\cap M^\perp}{\bf 1}_{\{\sphericalangle(z,u)\leq a\}}\,\nu_{\SSd\cap M^\perp}(\dint u)\nu_{\ell-1}^{z^\perp}(\dint M)\\
&\qquad=\int_{\GG(z^\perp,\ell-1)}\nu_{\SSd\cap M^\perp}(\{u\in\SSd\cap M^\perp:\sphericalangle(z,u)\leq a \})\,\nu_{\ell-1}^{z^\perp}(\dint M)\,.
\end{align*}
Since $M^\perp$ has dimension $d-\ell+1$, the set of points $u\in\SSd\cap M^\perp$ with $\sphericalangle(z,u)\leq a$ forms a spherical cap in the $(d-\ell)$-dimensional subsphere $\SSd\cap M^\perp$ of $\SSd$. It has radius of order $a$ and volume of order $a^{d-\ell}$, where by volume we mean here the normalized $(d-\ell)$-dimensional Hausdorff measure $\nu_{\SSd\cap M^\perp}$ on $\SSd\cap M^\perp$. Hence, for sufficiently small $a$, we have
$$
\nu_{\SSd\cap M^\perp}(\{u\in\SSd\cap M^\perp:\sphericalangle(z,u)\leq a \}) \gg a^{d-\ell}
$$
and, since $\nu_{\ell-1}^{z^\perp}$ is a probability measure, also
$$\nu_\ell(\{L\in\GG(d,\ell):\sphericalangle(z,L)\leq a\})\gg a^{d-\ell}\,.$$
The proof is complete.
\end{proof}

For $i\in\{1,\ldots,m\}$ put $F_i:=[z_i^1,\ldots,z_i^d]$ and define
$$
\widetilde{V}_\ell(z;F_i) := {d\choose \ell}{\k_d\over\k_\ell\k_{d-\ell}} \int_{\GG(d,\ell)}{\bf 1}_{\{L\cap C_i^2\neq\emptyset\}}\,\vol_\ell([z,F_i]|L)\,\nu_\ell(\dint L)\,,\quad z\in\D_i^0\,.
$$
The next lemma provides a lower bound for the variance of these local functionals.

\begin{lemma}\label{lem:VarianceI}
Fix $\ell\in\{1,\ldots,d\}$, let $i\in\{1,\ldots,m\}$ and let $Z_i$ be a point chosen with respect to the normalized Gaussian measure restricted to $\D_i^0$. Then
$$
\bV_i[\widetilde{V}_\ell(Z_i;F_i)] \gg (\log n)^{-(d-\ell+1)}\,,
$$
where the notation $\bV_i[\,\cdot\,]$ refers to the variance that is taken with respect to $Z_i\in\D_i^0$. 
\end{lemma}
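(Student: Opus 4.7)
The strategy is to establish a uniform lower bound on the directional derivative of $\widetilde V_\ell(\,\cdot\,;F_i)$ in the outward radial direction $u:=y_i/r$, and to convert it into the desired variance bound through a conditioning argument.

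For the derivative bound, fix $L\in\GG(d,\ell)$ with $\sphericalangle(u,L)\leq c_9/r$, where $c_9>0$ is a small constant depending only on $c_2$. The key geometric observation is that for such $L$ and every $z\in\D_i^0$, the projection $z|L$ lies strictly outside $F_i|L$ on the $+(u|L)$-side: the displacement of $z|L$ from $y_i|L$ along $u|L/\|u|L\|$ is of order $(1-c_2)r^{-1}$, while $F_i|L$ has extent only $O(\sin\sphericalangle(u,L))=O(r^{-1})$ in the same direction, and $c_9$ can be chosen small enough so that the former dominates. In the limit $\sphericalangle(u,L)\to 0$, $F_i|L$ degenerates to an $(\ell-1)$-dimensional subset of $y_i|L + (L\cap u^\perp)$ and $[z|L,F_i|L]$ becomes a genuine cone with $\ell$-volume $\ell^{-1}(z\cdot u - r)\,\vol_{\ell-1}(F_i|L_0)$, where $L_0 := \mathrm{span}(u)\oplus(L\cap u^\perp)$. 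A first-order perturbation in $\sphericalangle(u,L)$ then yields, for some constant $c_{16}>0$,
\[
\partial_u \vol_\ell\big([z,F_i]|L\big)\;\geq\; c_{16}\,\vol_{\ell-1}(F_i|L_0) \qquad (z\in\D_i^0).
\]

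Combining this pointwise estimate with Lemma~\ref{lem:Angle}, which gives $\nu_\ell(\{L:\sphericalangle(u,L)\leq c_9/r\})\gg r^{-(d-\ell)}$, and using that the vertices $w_1,\dots,w_d$ form a regular $(d-1)$-simplex in $u^\perp$ centred at $0$ (so that $\int_{\GG(u^\perp,\ell-1)}\vol_{\ell-1}(\mathrm{conv}(w_1|M,\dots,w_d|M))\,\nu_{\ell-1}^{u^\perp}(\dint M)$ is a strictly positive constant depending only on $d$ and $\ell$), I deduce the uniform gradient estimate
\[
\partial_u \widetilde V_\ell(z;F_i)\;\geq\;\alpha_n\quad(z\in\D_i^0), \qquad \alpha_n\;\gg\; r^{-(d-\ell)}.
\]
The indicator $\mathbf{1}_{\{L\cap C_i^2\neq\emptyset\}}$ is identically $1$ on this good set: by \eqref{eq:Ccones}, $C_i^2$ has axis close to $-u$ and half-angle $\arctan(2r)\approx\pi/2$, while a good $L$ contains a direction near $\pm u$.

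To pass from the gradient bound to the variance, decompose $Z_i=Tu+\zeta$ with $T:=Z_i\cdot u$ and $\zeta:=Z_i-Tu\in u^\perp$, and use the conditional variance inequality $\bV_i[\widetilde V_\ell(Z_i;F_i)]\geq\bE\bigl[\bV_i[\widetilde V_\ell(Z_i;F_i)\mid\zeta]\bigr]$. Conditionally on $\zeta$, the scalar map $t\mapsto\widetilde V_\ell(tu+\zeta;F_i)$ is non-decreasing with slope $\geq\alpha_n$ (integrating the gradient bound along a vertical segment inside $\D_i^0$), so the Efron--Stein identity in the single variable $T$ gives $\bV_i[\widetilde V_\ell(Z_i;F_i)\mid\zeta]\geq\alpha_n^2\,\bV_i[T\mid\zeta]$. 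A direct geometric estimate on the $d$-simplex $\D_i^0$ (whose radial extent is $c_2r^{-1}$, and on which the restricted Gaussian density is pinched between positive constants since $\|x\|^2$ varies by only $O(1)$) yields $\bE[\bV_i[T\mid\zeta]]\gg r^{-2}$. Combining the bounds, $\bV_i[\widetilde V_\ell(Z_i;F_i)]\gg \alpha_n^2\cdot r^{-2}\gg r^{-2(d-\ell+1)}=(\log n)^{-(d-\ell+1)}$, as required.

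The main obstacle is the derivative bound when $L$ only \emph{approximately} (not exactly) contains $u$: in that case $F_i|L$ is genuinely $\ell$-dimensional with one thin direction of size $\sim\sin\sphericalangle(u,L)\lesssim r^{-1}$, and one must check that this thin direction contributes only a sub-leading correction to the derivative. A careful perturbation analysis around the limit configuration $L\ni u$, together with a judicious choice of the angular threshold $c_9$ relative to $c_2$, is required.
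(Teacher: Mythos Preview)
Your plan follows a genuinely different route from the paper's proof. The paper avoids derivatives entirely: it fixes two disjoint sub-regions $R_i^1,R_i^2\subset\D_i^0$ (each carrying a constant fraction of the normalized Gaussian mass) and shows, via a set inclusion, that for $Z_i^1\in R_i^1$, $Z_i^2\in R_i^2$ one has $[Z_i^2,F_i]\subset[Z_i^1,F_i]$ together with an explicit ``wedge'' $G_i\subset[Z_i^1,F_i]\setminus[Z_i^2,F_i]$ of volume $\approx r^{-1}$. Then $\vol_\ell([Z_i^1,F_i]|L)-\vol_\ell([Z_i^2,F_i]|L)\ge\vol_\ell(G_i|L)\gg r^{-1}$ holds for every $L$ in the restricted Grassmannian set, and one finishes with the i.i.d.\ variance identity $\bV[X]=\tfrac12\bE[(X-X')^2]$. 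Your derivative-plus-conditional-variance scheme would reach the same exponent, and the conditioning on $\zeta$ with $\bE[\bV_i[T\mid\zeta]]\gg r^{-2}$ is fine; but the paper's two-point finite-difference argument is more robust because it trades the infinitesimal perturbation analysis for a single inclusion of convex bodies.

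There is, however, a genuine gap in your deduction of $\partial_u\widetilde V_\ell(z;F_i)\ge\alpha_n$. You lower-bound the integrand $\partial_u\vol_\ell([z,F_i]|L)$ only for the ``good'' subspaces $L$ with $\sphericalangle(u,L)\le c_9/r$, and then pass to a lower bound on the full integral $\int{\bf 1}_{\{L\cap C_i^2\neq\emptyset\}}\,\partial_u\vol_\ell([z,F_i]|L)\,\nu_\ell(\dint L)$. This is legitimate only if the integrand is non-negative for \emph{all} $L$ with $L\cap C_i^2\neq\{0\}$, and since $C_i^2$ has half-angle $\arctan 2r\approx\pi/2$, that set contains subspaces nearly orthogonal to $u$, for which $z|L$ need not be an extreme point of $[z,F_i]|L$ in the $u|L$-direction. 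The missing ingredient is the observation that for $z_1=z_2+tu$ with $t>0$ and both points in $\D_i^0$, one has $z_2\in[z_1,F_i]$ (because $-u$ lies in the internal cone $\mathrm{pos}(F_i-z_1)$ for small $c_2$, cf.\ \eqref{eq:Dcones}--\eqref{eq:Ccones}), hence $[z_2,F_i]\subset[z_1,F_i]$ and therefore $\vol_\ell([z_2,F_i]|L)\le\vol_\ell([z_1,F_i]|L)$ for \emph{every} $L$. Once this monotonicity is in hand, your restriction to good $L$ is justified. Note also that the ``main obstacle'' you flag at the end---the derivative bound for $L$ only approximately containing $u$---remains unresolved in your write-up; the paper circumvents both issues simultaneously by working with the finite set difference $G_i$ rather than an infinitesimal derivative.
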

\begin{proof}
Denote by $w_i$ the centre of the facet of $\D_i^0$ opposite to the vertex $y_i^0$, and define the points $w_i^1:={2\over 3}y_i^0+{1\over 3}w_i$ and  $w_i^2:={1\over 3}y_i^0+{2\over 3}w_i$. Furthermore, the regions $R_i^1,R_i^2\subset\D_i^0$ are given by $R_i^1:=(w_i^1-C_i^2)\cap\D_i^0$ and $R_i^2:=(w_i^2+C_i^2)\cap\D_i^0$. It is crucial to observe that one can find a constant $c_7$ only depending on $d$ such that $V_d(R_i^k)\geq c_7V_d(\D_i^0)$ for $k=1$ and $k=2$. This follows from (\ref{eq:Ccones}). Together with the first part of Lemma \ref{lem:ContentDeltaIJ} and the fact that the Gaussian density \eqref{eq:DensityNormal} satisfies
$$
\varphi_d(x) \approx {\sqrt{\log n}\over n}\qquad\text{for all } x\in\RRd\text{ with }r\leq\|x\|\leq r+{1\over r}\,,
$$
we see that the Gaussian measure of $R_i^k$ is
\begin{equation}\label{eq:ContentRk}
\g_d(R_i^k)\approx n^{-1}\,,\qquad k\in\{1,2\}\,.
\end{equation} 

Next, fix some $L\in\GG(d,\ell)$ that intersects the interior of the polar of the cone $C_i^2$. This condition means that $L$ has an orthonormal basis  $e_1,\ldots,e_\ell$ such that  the hyperplane $H_{i,0}:=\{x\in\RRd:\lan x,e_1\ran= \lan w_i^2,e_1\ran\}$ has only one point (namely the origin) in common with $C_i^2$. Let $H_{i,0}^+$ be the half-space bounded by $H_{i,0}$ not containing the origin. Finally, let us define the set $G_i:=H_{i,0}^+\cap(w_i^1+C_i^2)\subset\D_i^0$. We choose points $Z_i^1\in R_i^1$ and $Z_i^2\in R_i^2$. The whole construction is illustrated in Figure \ref{fig4} where $L$ appears translated by $w_i^2$ (not affecting the $\vol_\ell(G_i|L)$).

Observe now that $R_i^1$ and $R_i^2$ are separated by the hyperplane $H_{i,0}$. Consequently, we have that $Z_i^2 \in [Z_i^1,F_i]$, which implies the inclusion $[Z_i^2,F_i]\subset[Z_i^1,F_i]$. In addition, $G_i \subset H_{i,0}^+$ and $R_i^2\cap H_{i,0}^+=\{w_i^2\}$, which yields $G_i\cap[Z_i^2,F_i]=\{w_i^2\}$. Finally, we observe that the hyperplane parallel to $H_{i,0}$ separates $R_i^1$ and $G_i$, whence $G_i\subseteq[Z_i^1,F_i]$. 
\begin{figure}[t]
\centering
\begin{overpic} 
[width=0.99\textwidth]{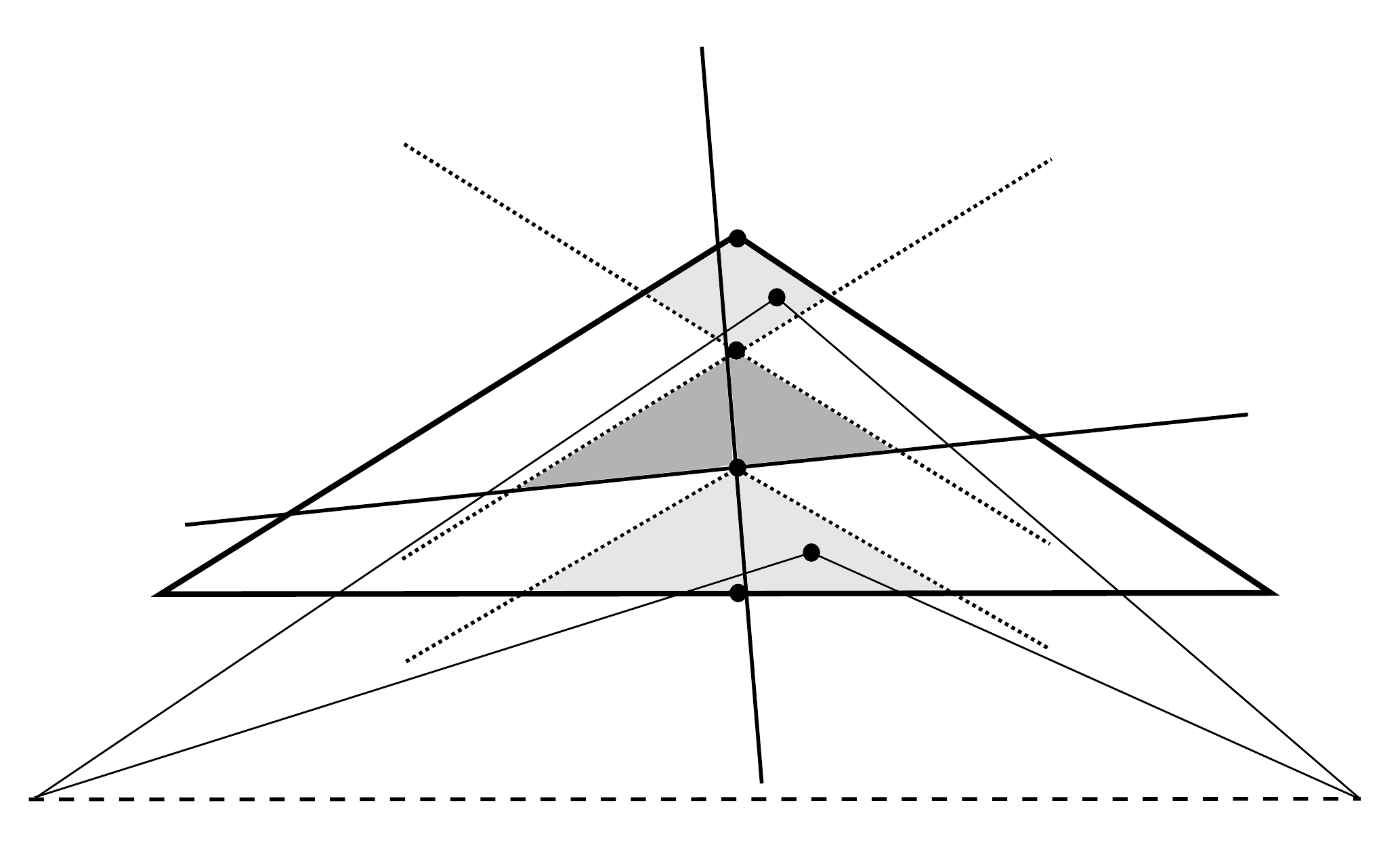} 
\put(0,8){\scriptsize $z_i^1$} 
\put(440,10){\scriptsize $z_i^2$} 
\put(210,0){\scriptsize $F_i=[z_i^1,z_i^2]$} 
\put(197,240){\scriptsize $L+w_i^2$} 
\put(228,72){\scriptsize $w_i$} 
\put(226,110){\scriptsize $w_i^2$} 
\put(243,157){\scriptsize $w_i^1$} 
\put(240,200){\scriptsize $y_i^0$} 
\put(236,178){\scriptsize $Z_i^1$} 
\put(250,100){\scriptsize $Z_i^2$} 
\put(200,90){\scriptsize $R_i^2$} 
\put(220,178){\scriptsize $R_i^1$} 
\put(210,130){\scriptsize $G_i$} 
\put(380,142){\scriptsize $H_{i,0}$} 
\put(413,78){\scriptsize $\D_i^0$} 
\end{overpic}
\caption{Construction in the proof of Lemma \ref{lem:VarianceI}.}
\label{fig4}
\end{figure}
The construction also shows $V_d(G_i) \approx r^{-1}=(\log n)^{-{1\over 2}}$ and implies that
\begin{equation}\label{eq:LowerBoundGi}
\vol_\ell(G_i|L)\gg (\log n)^{-{1\over 2}}\,.
\end{equation}
As result, we arrive at the estimate
$$
\vol_\ell([Z_i^1,F_i]|L)-\vol_\ell([Z_i^2,F_i]|L) \geq \vol_\ell(G_i|L)\,.
$$
Hence,
\begin{align*}
& \widetilde{V}_\ell(Z_i^1;F_i)-\widetilde{V}_\ell(Z_i^2;F_i)\\
& = {d\choose \ell}{\k_d\over\k_\ell\k_{d-\ell}} \int_{\GG(d,\ell)}{\bf 1}_{\{L\cap C_i^2\neq\emptyset\}}\,\big(\vol_\ell([Z_i^1,F_i]|L)-\vol_\ell([Z_i^2,F_i]|L)\big)\,\nu_\ell(\dint L)\\
& \geq {d\choose \ell}{\k_d\over\k_\ell\k_{d-\ell}} \int_{\GG(d,\ell)}{\bf 1}_{\{L\cap C_i^2\neq\emptyset\}}\,\vol_\ell(G_i|L)\,\nu_\ell(\dint L)\\
&\gg  (\log n)^{-{1\over 2}}\,\nu_\ell(\{L\in\GG(d,\ell):L\cap C_i^2\neq\emptyset\})\\
&\gg (\log n)^{-{1\over 2}}\,(\log n)^{-{d-\ell\over 2}} \\
& = (\log n)^{-{d-\ell+1\over 2}}\,,
\end{align*}
where we used \eqref{eq:LowerBoundGi}, the definition of $C_i^2$, and Lemma \ref{lem:Angle}. Note that the latter can indeed be applied with $a=1/ \log n$, since $1/\log n <c_4$ for sufficiently large $n$. Selecting now $Z_i^k$, $k\in\{1,2\}$, independently at random according to the normalized Gaussian measure restricted to $\D_i^0$ (i.e., $Z_i^1$ and $Z_i^2$ are independent copies of $Z_i$), we conclude that
\begin{align*}
\bV[\widetilde{V}_\ell(Z_i;F_i)] &= {1\over 2}\bE\big[\big(\widetilde{V}_\ell(Z_i^1;F_i)-\widetilde{V}_\ell(Z_i^2;F_i)\big)^2\big]\\
&\geq {1\over 2}\bE\big[\big(\widetilde{V}_\ell(Z_i^1;F_i)-\widetilde{V}_\ell(Z_i^2;F_i)\big)^2\,{\bf 1}_{R_i^1}(Z_i^1){\bf 1}_{R_i^2}(Z_i^2)\big]\\
&\gg (\log n)^{-(d-\ell+1)}\,\bP(Z_i^1\in R_i^1,Z_i^2\in R_i^2)\,.
\end{align*}
To obtain a lower bound for $\bP(Z_i^1\in R_i^1,Z_i^2\in R_i^2)$ we recall \eqref{eq:ContentRk} and combine this with the second assertion of Lemma \ref{lem:ContentDeltaIJ} as well as with the independence of the random points $Z_i^1$ and $Z_i^2$. This implies that
$$
\bP(Z_i^1\in R_i^1,Z_i^2\in R_i^2)=\prod_{k=1}^2\bP(Z_i^k\in R_i^1) = \prod_{k=1}^2{\g_d(R_i^k)\over\g_d(\D_i^0)} \geq c_8^2
$$
with a constant $c_8\in(0,\infty)$ only depending on $d$. Hence,
$$
\bV[\widetilde{V}_\ell(Z;F_i)] \gg (\log n)^{-(d-\ell+1)}\,,
$$
completing thereby the proof of the lemma.
\end{proof}

\section{Proof of Theorem \ref{thm:Main}}\label{sec:ProofMain}

Recall the geometric construction and its properties from the previous section and denote by $\cF\subset\cA$ the $\sigma$-field generated by the random points $X_1,\ldots,X_n$, except those in the simplices $\D_i^0$ for which ${\bf 1}_{A_i}=1$, $i\in\{1,\ldots,m\}$. The conditional variance formula implies that
\begin{equation*}
\begin{split}
\bV[V_\ell(K_n)] &= \bE\big[\bV[V_\ell(K_n)|\cF]\big]+\bV\big[\bE[V_\ell(K_n)|\cF]\big]\\
&\geq\bE\big[\bV[V_\ell(K_n)|\cF]\big]\,.
\end{split}
\end{equation*}
Now, conditioned on $\cF$, suppose that ${\bf 1}_{A_i}=1$, write $Z_i$ for the (unique) random point in $\D_i^0$ and denote by $F_i$ the convex hull of the random points in $\D_i^j$ with $j\in\{1,\ldots,d\}$. We notice that if ${\bf 1}_{A_i}=1$ for each $i\in I$ in a subset $I\subset\{1,\ldots,m\}$, then $(\widetilde{V}_\ell(Z_i;F_i):i\in I)$ is a family of independent random variables as a consequence of the result of Lemma \ref{lem:ConesCi}. This independence property implies that
$$
\bV[V_\ell(K_n)|\cF] = \sum_{i=1\atop {\bf 1}_{A_i}=1}^m\bV_i[V_\ell(K_n)]=\sum_{i=1\atop {\bf 1}_{A_i}=1}^m\bV_i[\widetilde{V}_\ell(Z_i;F_i)]\,,
$$ 
where, as in the previous section, the notation $\bV_i[\,\cdot\,]$ refers to the variance that is taken only with respect to the point $Z_i\in\D_i^0$ and we only sum over those $i\in\{1,\ldots,m\}$ with the property that ${\bf 1}_{A_i}=1$. These variances can be controlled by means of Lemma \ref{lem:VarianceI}, which implies that
$$
\bV[V_\ell(K_n)|\cF] \gg (\log n)^{-(d-\ell+1)}\,\sum_{i=1}^m{\bf 1}_{A_i}\,.
$$
Taking expectations and finally applying Lemma \ref{lem:ProbabilityAi} as well as Lemma \ref{lem:EstimateM}, we arrive at
\begin{align*}
\bV[V_\ell(K_n)] &\gg (\log n)^{-(d-\ell+1)}\,\sum_{i=1}^m\bP(A_i) \\
&\gg (\log n)^{-(d-\ell+1)} \times (\log n)^{d-1\over 2} \\
&= (\log n)^{\ell-{d+3\over 2}}\,.
\end{align*}
This completes the argument and the proof of Theorem \ref{thm:Main}. \hfill $\Box$

\subsection*{Acknowledgement}

Both authors were partially supported by ERC Advanced Grant 267165 (DISCONV), and the first author by the Hungarian NKFIH grants 111827 and 116769. We are grateful to Julian Grote (Bochum) for insights concerning the concentration inequality for $V_\ell(\Pi_n)$. We would also like to thank the referee for his/her stimulating comments and suggestions.

\end{document}